\renewenvironment{proof}{{\noindent \sc Proof:}}{\begin{flushright}$\blacksquare$\end{flushright}}
\newtheorem{teo}{Theorem}
\newtheorem*{q}{Question}
\newtheorem{col}[teo]{Corollary}
\newtheorem{lema}[teo]{Lemma}
\newtheorem*{ch}{Hypercyclicity Criterion}
\newtheorem*{cc}{Cyclicity Criterion}
\newtheorem*{mteo}{Main Theorem}
\theoremstyle{definition}
\newtheorem*{notacao}{Notation}
\DeclareMathOperator{\spn}{span}
\DeclareMathOperator{\orb}{orb}
\DeclareMathOperator{\im}{ran}
\newcommand{\norm}[1]{\left\lVert#1\right\rVert}
\newcommand{\abs}[1]{\left\lvert#1\right\rvert}
\newcommand{\eqdef}{\mathrel{\mathop:}=}
\newcommand{\restr}[2]{{\left.\kern-\nulldelimiterspace  #1\vphantom{\big|}\right|_{#2}}}
\title{Relationships between Cyclic and Hypercyclic Operators}
\author{André Augusto and Leonardo Pellegrini}
\thanks{The research of the first author was supported by CNPQ, grant 142035/2018-1.}
\address{Departamento de Matemática, Universidade de São Paulo}
\email{andreqa@ime.usp.br; leonardo@ime.usp.br}
\begin{document}
\begin{abstract}
A bounded linear operator $T$ on a Banach space $X$ is called hypercyclic if there exists a vector $x \in X$ such that $\orb{(x,T)}$ is dense in $X$. The Hypercyclicity Criterion is a well-known sufficient condition for an operator to be hypercyclic. One open problem is whether there exists a space where the Hypercyclicity Criterion is also a necessary condition. For a number of reasons, the spaces with very-few operators are some natural candidates to be a positive answer to that problem. In this paper, we provide a theorem that establishes some relationships for operators in these spaces.
\end{abstract}
\sloppy
\maketitle

\thispagestyle{empty}

\section*{Introduction}

A bounded linear operator $T$ on a separable Banach space $X$ is {\it hypercyclic} if there exists a vector $x \in X$ such that $\orb{(x,T)} \eqdef \{T^nx \, : \, n \geq 0\}$ is dense in $X$. Such vector $x$ is called a {\it hypercyclic vector} for $T$.

One of the most important results about hypercyclic operators is the well-known {\it Hypercyclicity Criterion}, stated below:

\begin{ch} Let $T$ be any operator on a separable Banach space $X$. Suppose that there exists two dense subsets $X_0, Y_0 \subseteq X$, an increasing sequence $(n_k)_{k \geq 1}$ of positive integers, and a sequence $(S_{n_k})_{k \geq 1}$ of maps (not necessarily linear nor continuous) $S_{n_k} \; : \, Y_0 \to X$ such that
\begin{enumerate}[(i)]
\item for every $x \in X_0$, $T^{n_k}x \to 0$;
\item for every $y \in Y_0$, $S_{n_k}y \to 0$;
\item for every $y \in Y_0$, $T^{n_k}S_{n_k}x \to x$.
\end{enumerate}

Then $T$ is hypercyclic.
\end{ch}

For a long time it was an open problem whether the Hypercyclicity Criterion was also a necessary condition for an operator to be hypercyclic or not. In \cite{read}, De la Rosa and Read showed that this was not the case, constructing a suitable space with hypercyclic operators that does not satisfy the Hypercyclicity Criterion.

In \cite{bayart}, Bayart and Matheron showed that such operators also exist on $\ell_p(\mathbb{N})$ spaces ($1 \leq p < \infty$) and $c_0(\mathbb{N})$. Since these spaces are, in some way, the building blocks in Banach space theory, it was natural to ask the following question:

\begin{q} Does exist a Banach space such that every hypercyclic operator necessarily satisfies the Hypercyclicity Criterion?
\end{q}

Every known example of a hypercyclic operator of the form $\lambda I + K$ satisfy the Hypercyclicity Criterion. With that in mind, one category of spaces that are a natural candidate to be a positive answer to the question above are the {\it spaces with very-few operators}. In these spaces, every bounded linear operator can be written as $\lambda I + K$, with $\lambda \in \mathbb{K}$ and $K$ compact - this property is known as {\it the scalar-plus-compact property}. However, since it is not known if every operator with this form also satisfy the criterion, there is not an answer to the question yet. 

One way to approach the previous question is to notice that if $T = \lambda I + K$ is hypercyclic, then $K$ is a cyclic operator.\footnote{Remember that a bounded linear operator $T$ on a separable Banach space $X$ is {\it cyclic} if there exists a vector $x \in X$ such that $\spn\orb{(x,T)} \eqdef \spn \{T^nx \, : \, n \geq 0\}$ is dense in $X$. Such vector $x$ is called a {\it cyclic vector} for $T$.} So it is now natural to try and establish some results concerning cyclic operators.

In \cite{grivaux}, Grivaux stated a {\it Cyclicity Criterion}, being inspired by its hypercyclic counterpart:

\begin{cc} Let $T$ be any operator on a separable Banach space $X$. Suppose that there exist two dense subsets $V$ and $W$ of $X$, a sequence $(p_k)_{k \geq 1}$ of polynomials, and a sequence $(S_k)_{k \geq 1}$ of maps (not necessarily linear nor continuous) $S_k \; : \, W \to X$ such that
\begin{enumerate}[(i)]
\item for every $x \in V$, $p_k(T)x \to 0$;
\item for every $x \in W$, $S_kx \to 0$;
\item for every $x \in W$, $p_k(T)S_kx \to x$.
\end{enumerate}

Then $T \oplus T$ is cyclic.
\end{cc}

It is clear that if an operator satisfy the Hypercyclicity Criterion, then it also satisfy the Cyclicity Criterion.

With the goal of establish some relationships between a hypercyclic operator $T = \lambda I + K$ and the cyclic operator $K$ itself, we obtained the following theorem, which is the main result of this paper:

\begin{mteo} \label{newteo} Let $T$ be a hypercyclic operator on a Banach space $X$ such that $T = \lambda I + S$, with $\lambda \in \mathbb{K}$. The following assertions are equivalent:
\begin{enumerate}[(i)]
\item $T$ satisfies the Hypercyclicity Criterion;
\item $T$ satisfies the Cyclicity Criterion;
\item $S$ satisfies the Cyclicity Criterion;
\item $S \oplus S$  is cyclic;
\item $T \oplus T$  is cyclic;
\item $T \oplus T$ is hypercyclic.
\end{enumerate} 
\end{mteo}

Some of the equivalences in the theorem above are well-known and we will be discussing this ahead. Also, the operator $S$ in the theorem above does not need to be compact.

Using our Main Theorem above, one have several ways to prove that a hypercyclic operator on a space with very-few operators satisfy the Hypercyclicity Criterion.

\begin{notacao}
Throughout this paper $X$ will denote an infinite dimensional separable Banach space, $\mathcal{B}(X)$ the algebra of bounded linear operators on $X$,  $\mathbb{K}[x]$ is the polynomial ring over the field $\mathbb{K}$ and $\deg{p}$ the degree of the polynomial $p \in \mathbb{K}[x]$.
\end{notacao}

\section{Main Theorem's Proof}

As we said in the introduction, some of the equivalences on our Main Theorem are well-known. In fact, this next theorem, which is a compilation of these previous known results, was the inspiration behind our Main Theorem:

\begin{teo} \label{teoequiv} Let $T \in \mathcal{B}(X)$ be hypercyclic. The following assertions are equivalent:
\begin{enumerate}[(i)]
\item $T$ satisfies the Hypercyclicity Criterion;
\item $T \oplus T$ is hypercyclic;
\item $T \oplus T$ is cyclic;
\item $T$ satisfies the Cyclicity Criterion.
\end{enumerate}
\end{teo}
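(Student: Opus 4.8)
The plan is to establish the single cycle of implications $(i) \Rightarrow (iv) \Rightarrow (iii) \Rightarrow (ii) \Rightarrow (i)$, which suffices for the full equivalence. Since the statement only compiles results already present in the literature, most of the arrows are either immediate from the criteria recalled above or direct citations; the only genuine work is arranging them so that the loop closes, with the standing hypothesis that $T$ is hypercyclic used exactly once, in the decisive step.

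First I would dispatch the two arrows that follow formally from the criteria themselves. For $(i) \Rightarrow (iv)$, if $T$ satisfies the Hypercyclicity Criterion with dense sets $X_0, Y_0$, sequence $(n_k)$ and maps $(S_{n_k})$, then putting $V = X_0$, $W = Y_0$, $S_k = S_{n_k}$ and taking the monomials $p_k(t) = t^{n_k}$ reproduces verbatim the three hypotheses of the Cyclicity Criterion, so $T$ satisfies it; this is exactly the remark already made in the introduction. For $(iv) \Rightarrow (iii)$ there is nothing to prove, as it is precisely the conclusion of the Cyclicity Criterion that $T \oplus T$ is cyclic.

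Next, $(ii) \Rightarrow (i)$ is the B\`es--Peris characterization of the Hypercyclicity Criterion: an operator $T$ satisfies the criterion if and only if it is weakly mixing, that is, if and only if $T \oplus T$ is hypercyclic. I would simply invoke that equivalence in the direction needed here.

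The hard part will be $(iii) \Rightarrow (ii)$, and this is where I expect the main obstacle to lie. One must pass from the mere cyclicity of $T \oplus T$, namely density of the span of a single orbit in $X \oplus X$, to its hypercyclicity, namely density of the orbit itself. This is exactly the point at which the assumption that $T$ is hypercyclic becomes indispensable, and it is the deep ingredient of the whole theorem: I would invoke Grivaux's theorem from \cite{grivaux}, which asserts that for a hypercyclic operator $T$, cyclicity of $T \oplus T$ already forces hypercyclicity of $T \oplus T$. Combining this with the three routine arrows closes the cycle and yields the stated equivalence.
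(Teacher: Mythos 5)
Your proposal is correct, and it closes the loop with a genuinely leaner decomposition than the paper's. The paper proves the three pairwise equivalences separately by citation: $(i) \Leftrightarrow (ii)$ via Bès--Peris \cite{equiv}, $(ii) \Leftrightarrow (iii)$ via Grivaux \cite[Proposition 4.1]{grivaux}, and $(iii) \Leftrightarrow (iv)$ via Grivaux \cite[Proposition 5.3]{grivaux}, the last of which requires the additional spectral ingredient that the adjoint of a hypercyclic operator has no eigenvalues (Bès \cite[Lemma 1]{bes}). Your cycle $(i) \Rightarrow (iv) \Rightarrow (iii) \Rightarrow (ii) \Rightarrow (i)$ replaces that last equivalence entirely by two trivial arrows: the passage from the Hypercyclicity Criterion to the Cyclicity Criterion via the monomials $p_k(t) = t^{n_k}$ (which is exactly the remark made in the paper's introduction), and the observation that $T \oplus T$ cyclic is literally the conclusion of the Cyclicity Criterion. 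The two deep external inputs you do use --- Grivaux's result that cyclicity of $T \oplus T$ forces its hypercyclicity when $T$ is hypercyclic, and the Bès--Peris characterization --- are the same Propositions 4.1 and Theorem 2.3 the paper cites, and you invoke each in only one direction. What your route buys is economy: two citations instead of three, no appeal to the eigenvalue-free adjoint, and a clear accounting of where the standing hypothesis that $T$ is hypercyclic is actually needed (only in $(iii) \Rightarrow (ii)$). What the paper's route buys is attribution: each pairwise equivalence is matched to its source in the literature, and Grivaux's Proposition 5.3 gives $(iii) \Leftrightarrow (iv)$ in the greater generality of operators whose adjoint lacks eigenvalues, a fact the paper reuses conceptually elsewhere. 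As a proof of the stated theorem, your version is complete and, if anything, cleaner.
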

\begin{proof} The equivalence between $(i)$ and $(ii)$ is due to Bès and Peris \cite[Theorem 2.3]{equiv}. Since the adjoint $T^*$ of an hypercyclic operator has no eigenvalues (this result is due to Bès \cite[Lemma 1]{bes}), then the equivalence between $(iii)$ and $(iv)$ follows from a result of Grivaux \cite[Proposition 5.3]{grivaux}. Finally, the equivalence between $(ii)$ and $(iii)$ is also due to Grivaux \cite[Propositon 4.1]{grivaux}.
\end{proof}

Our Main Theorem would easily follow from the theorem above if we had that $S = T - \lambda I$ is hypercyclic whenever $T$ is. However, this is not the case: as we mentioned in the introduction, there are hypercyclic operators $T$ such that $T = \lambda I + K$, with $K$ being a compact operator - and it is known that compact operators are not hypercyclic (see \cite[Theorem 5.11]{LC}).

However, one can notice that Theorem \ref{teoequiv} will still help us prove our Main Theorem; being more specific, the equivalences between $(i)$, $(v)$ and $(vi)$ on the Main Theorem are all already proved by Theorem \ref{teoequiv}. 

That being said, the remaining equivalences will follow from the next lemma:

\begin{lema} \label{pol} Let $T, S \in \mathcal{B}(X)$ such that $T = \lambda I + S$. Then $\mathbb{K}[T] = \mathbb{K}[S]$.
\end{lema}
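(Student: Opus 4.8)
The plan is to prove the two inclusions $\mathbb{K}[T] \subseteq \mathbb{K}[S]$ and $\mathbb{K}[S] \subseteq \mathbb{K}[T]$ separately, and then to observe that the second follows from the first by the symmetric relation $S = T + (-\lambda)I$. Recall that $\mathbb{K}[A] \eqdef \{p(A) : p \in \mathbb{K}[x]\}$ for any $A \in \mathcal{B}(X)$, and that the evaluation map $p \mapsto p(A)$ is a unital algebra homomorphism from $\mathbb{K}[x]$ into $\mathcal{B}(X)$. Thus to place a given $p(T)$ inside $\mathbb{K}[S]$ it suffices to exhibit a single polynomial $q \in \mathbb{K}[x]$ with $p(T) = q(S)$.

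The key step is the substitution identity: for every $p \in \mathbb{K}[x]$, if one defines the shifted polynomial $q(x) \eqdef p(x+\lambda)$, then $q(S) = p(S + \lambda I) = p(T)$. To verify this it suffices, by linearity of both sides in $p$, to treat the monomials $p(x) = x^n$. For these I would expand $(S + \lambda I)^n$ by the binomial theorem—which is legitimate precisely because $S$ commutes with $\lambda I$—and compare it term by term with $q(S) = (S+\lambda I)^n$ obtained by evaluating $q(x)=(x+\lambda)^n$ at $S$; the two expansions coincide. Since $q \in \mathbb{K}[x]$, this exhibits $p(T) = q(S)$ as an element of $\mathbb{K}[S]$, giving $\mathbb{K}[T] \subseteq \mathbb{K}[S]$.

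For the reverse inclusion I would run the same argument with the roles of $T$ and $S$ interchanged, using $S = T + (-\lambda)I$: given $p \in \mathbb{K}[x]$, the polynomial $r(x) \eqdef p(x-\lambda)$ satisfies $r(T) = p(T - \lambda I) = p(S)$, so $p(S) \in \mathbb{K}[T]$. Combining the two inclusions yields $\mathbb{K}[T] = \mathbb{K}[S]$.

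There is no serious obstacle here: the whole argument rests on the trivial commutation $S(\lambda I) = (\lambda I)S$, which is what makes the binomial expansion valid. The only point requiring a little care is to keep the distinction between composing polynomials in the indeterminate $x$ (forming $q(x) = p(x+\lambda)$) and substituting operators (forming $p(S+\lambda I)$), and to confirm that these two operations are compatible; this compatibility is exactly the substitution identity established above.
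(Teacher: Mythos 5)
Your proof is correct, but it takes a genuinely different route from the paper. The paper dismisses the inclusion $\mathbb{K}[T] \subseteq \mathbb{K}[S]$ as clear and then proves the nontrivial inclusion $\mathbb{K}[S] \subseteq \mathbb{K}[T]$ by induction on the degree: given $p(S)$ of degree $n+1$, it expands $a_{n+1}T^{n+1} = a_{n+1}(\lambda I + S)^{n+1}$ to isolate the leading term $a_{n+1}S^{n+1}$, writes $p(S)$ as $a_{n+1}T^{n+1}$ plus a polynomial in $S$ of degree at most $n$, and invokes the induction hypothesis. You instead prove both inclusions at once via the substitution identity: $q(x) \eqdef p(x+\lambda)$ satisfies $q(S) = p(T)$, and symmetrically $r(x) \eqdef p(x-\lambda)$ satisfies $r(T) = p(S)$, the point being that $x \mapsto x + \lambda$ is an invertible change of variable on $\mathbb{K}[x]$ compatible with evaluation at a fixed operator. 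Your argument is shorter and makes the symmetry between $T$ and $S$ transparent — the two inclusions are literally the same statement with $\lambda$ replaced by $-\lambda$ — whereas the paper's induction does the equivalent bookkeeping by hand, term by term, without needing to formalize the compatibility of polynomial composition with operator substitution. You were right to flag that compatibility as the one point requiring care: it is exactly where commutativity of $S$ with $\lambda I$ enters, and once it is stated the rest is immediate. Both proofs are complete; yours generalizes with no extra work to $T = \alpha S + \beta I$ for any $\alpha \neq 0$, while the paper's inductive computation would need minor adjustments.
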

\begin{proof} It is clear that $\mathbb{K}[T] \subseteq \mathbb{K}[S]$. We will show the other inclusion by induction on the polynomial degree.

Let $p(S) \in \mathbb{K}[S]$.  If the degree of $p$ is 1, then $p(S) = a_0I + a_1S = (a_0 - a_1\lambda)I + a_1(\lambda I + S) = (a_0 - a_1\lambda)I + a_1T = q(T) \in \mathbb{K}[T]$.

Assume now that every polinomial with degree less or equal to $n$ belongs to $\mathbb{K}[T]$. Consider $p(S) = a_0I + a_1S + \ldots + a_nS^n + a_{n+1}S^{n+1}$. We have that $$a_{n+1}T^{n+1} = a_{n+1}(\lambda I + S)^{n+1} = q(S) + a_{n+1}S^{n+1}$$ with $\deg{q} \leq n$. Then, $a_{n+1}S^{n+1} = a_{n+1}T^{n+1} - q(S)$ which implies that \begin{align*}
p(S) & = a_0I + a_1S + \ldots + a_nS^n + a_{n+1}S^{n+1} \\
& = a_0I + a_1S + \ldots + a_nS^n + (a_{n+1}T^{n+1} - q(S)) \\
& = [a_0I + a_1S + \ldots + a_nS^n - q(S)] + a_{n+1}T^{n+1}
\end{align*}

Defining $r(S) \eqdef a_0I + a_1S + \ldots + a_nS^n - q(S)$, we have that $\deg r(S) \leq n$. By our induction hypothesis, $r(S) \in \mathbb{K}[T]$.

Since $\mathbb{K}[T]$ is a ring and $r(S), a_{n+1}T^{n+1} \in \mathbb{K}[T]$, then $r(S) + a_{n+1}T^{n+1} = p(S) \in \mathbb{K}[T]$, as desired.
\end{proof}

With that lemma in hand, the proof of our Main Theorem is quite straightfoward: 

\subsection*{Proof of the Main Theorem}

\noindent \underline{(i) $\Rightarrow$ (ii)}. Trivial. \\

\noindent \underline{(ii) $\Rightarrow$ (iii)}. By hypothesis, there exists $V, W \subseteq X$ dense subsets, $(p_k)_{k \geq 1}$ a sequence of polynomials and $(S_k)_{k \geq 1}$ a sequence of maps that satisfy conditions $(i), (ii)$ and $(iii)$ of the Cyclicity Criterion for $T$. By Lemma \ref{pol}, for each $k \geq 1$ there is some $q_k \in \mathbb{K}[x]$ such that $p_k(T) = q_k(S)$. Hence, using this equality, the same subsets $V, W \subseteq X$ and maps $(S_k)_{k \geq 1}$ that was used for $T$, it easily follows that $S$ satisfy the Cyclicity Criterion as well. \\

\noindent \underline{(iii) $\Rightarrow$ (iv)}. Straightfoward from the Cyclicity Criterion. \\

\noindent \underline{(iv) $\Rightarrow$ (v)}. It also follows from Lemma \ref{pol}. Indeed, suppose that $S \oplus S$ is cyclic with cyclic vector $(x,y)$. Thus, given nonempty open sets $(U,V)$ there exists a polynomial $p$ such that $p(S)x \in U$ and $p(S)y \in V$. By Lemma \ref{pol}, there exists a polynomial $q$ such that $q(T) = p(S)$. Thus, we have that $q(T)x \in U$ and $q(T)y \in V$, which shows that $T \oplus T$ is cyclic with cyclic vector $(x,y)$. \\
 
\noindent \underline{(v) $\Rightarrow$ (vi) and (vi) $\Rightarrow$ (i)}. \smallskip Theorem \ref{teoequiv}.

\begin{flushright}$\blacksquare$\end{flushright}

One can notice that the equivalence between $(ii)$ and $(iii)$ relies only on Lemma \ref{pol}, not using at all the hypothesis that $T$ is hypercyclic.

A theorem by Bourdon \cite{bourdon} states that if $p \in \mathbb{K}[x]$ is a nonzero polynomial and $T$ is a hypercyclic operator, then $p(T)$ has dense range. This result, along with Lemma \ref{pol}, yields the next corollary:

\begin{col} If $T = \lambda I + S$ is hypercyclic, then $S$ is cyclic with dense range. Also, if $p \in \mathbb{K}[x]$ is a nonzero polynomial, then $p(S)$ has dense range.
\end{col}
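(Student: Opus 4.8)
The plan is to treat the two assertions of the corollary separately, using Lemma~\ref{pol} as the common engine and invoking Bourdon's theorem only for the dense-range part.

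First I would prove that $S$ is cyclic. Since $T$ is hypercyclic it is in particular cyclic, so there is a vector $x$ with $\spn\orb{(x,T)}$ dense in $X$. The decisive observation is that this cyclic subspace is precisely the polynomial orbit $\mathbb{K}[T]x = \{p(T)x : p \in \mathbb{K}[x]\}$. By Lemma~\ref{pol} we have $\mathbb{K}[T] = \mathbb{K}[S]$, hence $\mathbb{K}[T]x = \mathbb{K}[S]x = \spn\orb{(x,S)}$. Therefore $\spn\orb{(x,S)}$ is dense and $S$ is cyclic, with the same cyclic vector $x$.

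For the range statement I would first show that neither $T$ nor $S$ is algebraic, and this is the one step I expect to require genuine (if short) argument rather than a citation. If some nonzero $p$ satisfied $p(T) = 0$, then for a hypercyclic vector $x$ the span of $\orb{(x,T)}$ would collapse onto the finite-dimensional space $\spn\{x, Tx, \dots, T^{\deg p - 1}x\}$, contradicting its density in the infinite-dimensional $X$; thus $p(T) \neq 0$ for every nonzero $p$. Writing $S = T - \lambda I$ gives $p(S) = \tilde p(T)$ with $\tilde p(\cdot) = p(\cdot - \lambda)$ nonzero whenever $p$ is, so the same conclusion transfers to $S$: no nonzero polynomial annihilates it.

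Finally, given a nonzero $p \in \mathbb{K}[x]$, Lemma~\ref{pol} supplies a polynomial $q$ with $p(S) = q(T)$. Because $p(S) \neq 0$ by the preceding paragraph and $T$ is not algebraic, $q$ cannot be the zero polynomial, so Bourdon's theorem applies and guarantees that $q(T)$, and hence $p(S)$, has dense range. Specializing to $p(x) = x$ yields that $S$ itself has dense range, which together with the first paragraph establishes the full corollary.
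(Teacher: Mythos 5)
Your proof is correct and rests on the same two ingredients as the paper's: Lemma \ref{pol} and Bourdon's theorem. You are in fact more complete in two places where the paper is terse: the paper's proof addresses only the dense-range statements (it gets the dense range of $S$ directly from $S = p(T)$ with $p(x) = x - \lambda$ and leaves the cyclicity of $S$ implicit, whereas you derive it explicitly from $\spn\orb{(x,S)} = \mathbb{K}[S]x = \mathbb{K}[T]x = \spn\orb{(x,T)}$), and the paper asserts without justification that the polynomial $q$ satisfying $p(S) = q(T)$ is nonzero, a point you settle correctly by observing that a hypercyclic operator cannot be algebraic.
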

\begin{proof} Notice that $S = T - \lambda I = p(T)$, where $p(x) = x - \lambda$. Thus, $p(T) = S$ has dense range.

The second part is similar: if $p \in  \mathbb{K}[x]$ is a nonzero polynomial, then by Lemma \ref{pol} there exists a nonzero polynomial $q \in \mathbb{K}[x]$ such that $p(S) = q(T)$. Hence, since $q(T)$ has dense range, we have that $p(S)$ also has dense range.
\end{proof}

Every hypercyclic operator has dense range (as an easy consequence of Bourdon's Theorem). However, this is not the case for cyclic operators. Indeed, let $F$ be the forward shift on $\ell_2$. It is not hard to see that $F$ is a cyclic operator that does not have dense range (since $e_1 \not\in \im{F}$). Thus, every operator $\lambda I + F$ is not hypercyclic.

As far the second statement in the corollary goes, the same argument can be used: if $T$ is a cyclic operator, it is not necessarily true that $p(T)$ has dense range, for any $p \in \mathbb{K}[x]$.

With that in mind, the previous corollary establishes that when an operator $S$ is cyclic and such that $\lambda I + S$ is hypercyclic, then $S$ itself share some properties that usually only hypercyclic operators have. Now, it is fair to ask if other properties from hypercyclic operators are also shared with $S$. For example, is it known that if $T$ is hypercyclic, then $T^n$ is as well (this is known as Ansari's Theorem, see \cite[Theorem 1]{ansari}). In the case illustrated above, would $S^n$ be cyclic?

\section{The Bayart/Matheron Construction on Banach Spaces with Very-few Operators}

As we said in the introduction, Bayart and Matheron constructed in \cite{bayart} operators on $\ell_p$ and $c_0$ that does not satisfy the Hypercyclicity Criterion. In order to answer the question we raised before, it is natural to try to replicate this construction on Banach spaces with very-few operators. To facilitate what we are going to discuss next, we state below the theorem proved by Bayart and Matheron. We remember that the {\it forward shift associated to} $(e_i)$, where $(e_i)_{i \geq 0} \subseteq X$ is a linearly independent sequence, is the linear operator $S \, : \, E \to E$ defined by $S(e_i) = e_{i+1}$, where $E = \spn\{e_i \, : \, i \geq 0\}$.

\begin{teo}[Theorem 1.1 on \cite{bayart}] \label{teobayart} Let $X$ be a Banach space. Assume $X$ has a normalized unconditional basis $(e_i)_{i \geq 0}$ whose associated forward shift is continuous. Then there exists a hypercyclic operator $T \in \mathcal{B}(X)$ such that $T \oplus T$ is not hypercyclic.
\end{teo}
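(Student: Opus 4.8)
The plan is to construct explicitly a hypercyclic operator $T$ for which $T \oplus T$ fails to be hypercyclic. By Theorem \ref{teoequiv} this is the same as exhibiting a hypercyclic operator that does not satisfy the Hypercyclicity Criterion, so the whole burden is to build a transitive operator that is \emph{not} weakly mixing. I would follow the De la Rosa--Read scheme, adapted to the given basis. First I would fix a very rapidly increasing sequence of integers $(b_n)_{n \geq 1}$ together with an auxiliary sequence of scalars, and use them to define, by a triangular (hence invertible) change of coordinates, a new linearly independent sequence $(f_i)_{i \geq 0} \subseteq E \eqdef \spn\{e_i : i \geq 0\}$; the $f_i$ would coincide with the $e_i$ away from the block boundaries prescribed by $(b_n)$, where small controlled corrections are inserted. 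The operator $T$ is then the forward shift associated to $(f_i)$, namely $T f_i = f_{i+1}$, extended linearly to the dense subspace $E$.

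The first substantive step is to check that $T$ extends to a bounded operator on all of $X$, and this is precisely where the two hypotheses are used. Because $(e_i)$ is a normalized \emph{unconditional} basis, the norm of a finitely supported vector is comparable to that of the vector obtained by truncating or re-signing its coefficients, so the action of $T$ on each block can be estimated coefficient by coefficient; and because the forward shift of $(e_i)$ is continuous, the unperturbed part of $T$ is bounded for free. Choosing $(b_n)$ to grow fast enough makes the inserted corrections summable, so that $\norm{T} < \infty$. I would then record that $\spn\{f_i : i \geq 0\} = E$ is dense, which follows from the triangularity of the change of coordinates, and that $T$ maps $E$ into itself with dense range.

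Next I would prove that $T$ is hypercyclic, with $f_0$ as the natural candidate hypercyclic vector. The corrections are designed exactly so that at a distinguished sequence of ``connecting times'' $N_n$ the iterate $T^{N_n} f_0$ is forced to lie close to a prescribed element of a fixed countable dense subset of $X$. Letting $n$ vary shows that $\seg{\orb(f_0,T)}$ meets every basic open set, so $T$ is topologically transitive, and the Birkhoff transitivity theorem (valid since $X$ is separable) yields hypercyclicity.

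The main obstacle, and the heart of the matter, is to show that $T \oplus T$ is not hypercyclic. The strategy is to produce two pairs of nonempty open sets, say neighborhoods $U_1,V_1$ and $U_2,V_2$ taken around suitable multiples of $f_0$, whose return-time sets $\{n : T^n U_1 \cap V_1 \neq \emptyset\}$ and $\{n : T^n U_2 \cap V_2 \neq \emptyset\}$ are disjoint, so that no single $T^n \oplus T^n$ can map $U_1 \times U_2$ into $V_1 \times V_2$. Concretely, the construction must guarantee that the only iterates bringing a neighborhood of $f_0$ back near the first target are the connecting times $N_n$, whereas at those very times $T^{N_n}$ acts in a rigid, near-identity fashion on the second pair, forbidding the second return. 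Establishing this incompatibility rigorously --- tracking the coefficients of $T^n f_0$ against the perturbations for \emph{all} $n$, not merely along the connecting subsequence, and verifying that the two constraints can never hold simultaneously --- is the delicate estimate on which everything rests, and it is here that the rapid growth of $(b_n)$ must be exploited most carefully.
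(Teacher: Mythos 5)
Your proposal should first be measured against the right target: the paper does not prove Theorem \ref{teobayart} at all --- it is quoted from Bayart--Matheron \cite{bayart}, and the paper only recalls the shape of the construction (equation \eqref{eqdefT}) in order to argue that the resulting operator cannot be continuous on a space with very-few operators. Compared with the argument in \cite{bayart}, your outline does reproduce its architecture faithfully: a forward shift $Tf_i = f_{i+1}$ with respect to a perturbed, triangularly-defined sequence $(f_i)$ built over blocks determined by $(b_n)$; boundedness via unconditionality of $(e_i)$ together with continuity of its associated shift; hypercyclicity of $T$ with hypercyclic vector $f_0$ forced at connecting times; and non-hypercyclicity of $T \oplus T$ by exhibiting two pairs of open sets whose return-time sets are disjoint. (One small simplification: the Birkhoff detour is unnecessary --- once $\orb(f_0,T)$ meets every basic open set, $f_0$ is already a hypercyclic vector.)

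The genuine gap is that every substantive step is asserted rather than established, and the decisive one is missing entirely. You state that the construction ``must guarantee'' that returns of a neighborhood of $f_0$ near the first target happen only at the times $N_n$, and that at precisely those times $T^{N_n}$ acts in a rigid near-identity way on the second pair of sets; but you give no mechanism --- no choice of $\varepsilon_n$, of the vectors $f_n$, of the neighborhoods, and no inequality --- that would force either property, let alone both simultaneously. That simultaneous requirement is the entire content of the theorem: the difficulty in \cite{bayart} (and in De la Rosa--Read \cite{read}) is exactly the tension between making the orbit of $f_0$ dense, which pushes the operator toward weak mixing, and controlling the action of $T^n$ for \emph{every} $n$, not merely along the subsequence $(N_n)$; resolving it through the interplay of $\varepsilon_n$, $f_n$ and the growth of $(b_n)$ occupies essentially all of \cite{bayart}. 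Deferring that estimate with the remark that the rapid growth of $(b_n)$ ``must be exploited most carefully'' amounts to assuming the conclusion. The same objection applies, less severely, to boundedness (the corrections are not automatically ``summable''; unconditionality is needed to control arbitrary finite sections, which is precisely why the hypothesis cannot be dropped --- the point the paper goes on to exploit) and to the density of the orbit (one needs a quantitative statement that $T^{N_n}f_0 = f_{N_n}$ approximates the $n$-th element of a dense set while later corrections do not spoil earlier approximations).
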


One immediate problem arises when we try to use the theorem above on Banach spaces with very-few operators: it is well known that these spaces do not admit an unconditional basis (in particular, they do not admit unconditional basic sequences). Since the unconditional basis was used in \cite{bayart} to prove that the operator $T$ constructed in the theorem above was continuous, it is reasonable to try to find other ways to prove that continuity. Not only that, but since there are so few known Banach spaces with very-few operators\footnote{The only spaces with very-few operators known by the authors are the famous Argyros-Haydon space \cite{AH}, the $X_{\mathfrak{nr}}$ space constructed by Argyros and Motakis \cite{AM} and the $X_{Kus}$ space constructed by Manoussakis, Pelczar-Barwacz and Swietek \cite{MPS}.} and they all follow the Bourgain-Delbaen method in their construction, we could use some properties (other than the scalar-plus-compact one) of these specific spaces and their constructions to prove the continuity of $T$.

%

We will show, however, that the additional properties of these known spaces are not enough: if a space with very-few operators $X$ has a Schauder basis (not necessarily unconditional), the operator $T$ constructed in Theorem \ref{teobayart} can not be continuous.

In order to show that, let us recall how the operator $T$ was constructed in \cite{bayart}. Let $(b_n)_{n  \geq 1}$ be an increasing sequence of positive numbers. If $(e_n)_{n \geq 0}$ is a normalized basis for the Banach space $X$, we define $T$ such as \begin{equation} \label{eqdefT}
T(e_i) = \begin{cases}
2e_{i+1} & \textnormal{  if  } i \in [b_{n-1}, b_n - 1[, \textnormal{  for some  } n \geq 1 \\
\varepsilon_ne_{b_n} + f_n &  \textnormal{  if  } i = b_n - 1, \textnormal{  for some  } n \geq 1 \\
\end{cases}
\end{equation} where $\varepsilon_n \in \mathbb{K}$ and $f_n \in X$.

The precise defitinion of $\varepsilon_n$ and $f_n$ will not be needed right now. Suppose that $T$ is continuous and $X$ has very-few operators. Hence, we may write $T = \lambda I + K$, for some $\lambda \in \mathbb{K}$ and $K \in \mathcal{B}(X)$ compact. We will show that the bounded sequence $(e_{j_k})_{k \geq 1} \subseteq  (e_n)_{n \geq 0}$, where $j_k \neq b_n - 1$ for every $n \geq 1$, is such that $(Ke_{j_k})_{k \geq 1}$ does not have a convergent subsequence, an obvious contradiction.

We have that $Ke_{j_k} = (T - \lambda I)(e_{j_k}) = 2e_{{j_k}+1} - \lambda e_{j_k}$, since $j_k \neq b_n - 1$. Let us recall now that if $(x_n)_{n \in \mathbb{N}}$ is a basis for a Banach space $(X, \norm{\cdot})$ and $x = \sum_{n=1}^{\infty} \alpha_nx_n$, then $\norm{x}_{(x_n)} = \sup_{k}\norm{\sum_{n=1}^{k} \alpha_nx_n}$ is an equivalent norm to $\norm{\cdot}$.

Let now $(Ke_{j_{k_i}})_{i \geq 1} \subseteq (Ke_{j_k})_{k \geq 1}$ be any subsequence. If $\lambda \neq 0$, we have: \begin{align*}
\norm{Ke_{j_{k_i}} - Ke_{j_{k_l}}}  & \geq M \norm{Ke_{j_{k_i}} - Ke_{j_{k_l}}}_{(e_n)}  = M\norm{2e_{j_{k_i}+1} - \lambda e_{j_{k_i}} - 2e_{{j_{k_l}+1}} + \lambda e_{j_{k_l}}}_{(e_n)} \\
& \geq M\norm{- \lambda e_{j_{k_i}} + 2e_{j_{k_i}+1} + \lambda e_{j_{k_l}} - 2e_{{j_{k_l}+1}} }_{(e_n)} \\
& \geq M \abs{\lambda}
\end{align*} for $i > l$. If $\lambda = 0$, we have:
\begin{equation} \label{eqbayart}
\norm{Ke_{j_{k_i}} - Ke_{j_{k_l}}} \geq  M\norm{2e_{j_{k_i}+1} - 2e_{{j_{k_l}+1}}}_{(e_n)} \geq 2M
\end{equation} In any case, we have that $\norm{Ke_{j_{k_i}} - Ke_{j_{k_l}}} \not\to 0$ and, therefore, such subsequence does not converge.

One can ask if we can modify the definition of $T$ in order to fix the problem above. In subsequent works, Bayart and Matheron simplified the original construction done in \cite{bayart}. In Section 4.2 of their book \cite{DLO}, they showed how they improved the original construction: instead of defining $T(e_i) = 2e_{i+1}$ for $i \in [b_{n-1}, b_n - 1[$, they define $T(e_i) = w(i+1)e_{i+1}$, where $(w(i))_{i \geq 1}$ is an increasing sequence of positive numbers such that $$w(i) \eqdef 4\left(1 - \frac{1}{2\sqrt{i}}\right)$$ for every $i \geq 1$.

With this definition of $T$ in mind, the proof of the non-continuity of $T$ should change. However, it is clear that the only change would occur in Equation \eqref{eqbayart}. We would have \begin{equation} \label{eqbayart2}
\norm{Ke_{j_{k_i}} - Ke_{j_{k_l}}} \geq w(j_{k_l})M
\end{equation} and since $(w(i))_{i \geq 1}$ is an increasing sequence such that $w(i) \to 4$, then $T$ is still non-continuous.

However, one can again ask if we could not change the sequence $(w(i))_{i \geq 1}$ for one that is decreasing and such that $w(i) \to 0$. This would eliminate the contradiction obtained in Equation \eqref{eqbayart2} and would leave open the question of whether $T$ is continuous or not.

Looking again at the Bayart and Matheron construction, even if we choose $(w(i))_{i \geq 1}$ decreasing and such that $w(i) \to 0$, the operator $T$ would still not be continuous. Although this would solve the problems encountered in Equation \eqref{eqbayart2}, this choice would make either $(\varepsilon_n)_{n \geq 1}$ or $(f_n)_{n \geq 1}$ unbounded. Hence, $T$ would not be continuous: this time, $T(e_{b_n - 1}) = \varepsilon_ne_{b_n} + f_n$ would be the expression that makes $T$ not continuous (see the definition of $T$ in \eqref{eqdefT}).

Since this last expression is the one that makes $T$ hypercyclic - and Bayart and Matheron did lots of work to make that happen - it is now better to have other ideas than to fix this approach.

\section*{Final Remark}

This article is part of the first author's PhD thesis, written under the supervision of the second author.

\end{document}